\theoremstyle{plain}
\newtheorem{thm}{Theorem}[section]
\newtheorem{lem}[thm]{Lemma}
\newtheorem{cor}[thm]{Corollary}
\theoremstyle{definition}
\newtheorem{deff}[thm]{Definition}
\newtheorem{examp}[thm]{Example}
\newtheorem{partition}[thm]{Partition}
\newtheorem{setup}[thm]{Setup}
\newtheorem{rem}[thm]{Remark}
\theoremstyle{remark}
\newtheorem{note}[thm]{}
\def\ds{\displaystyle}
\def\Z{\mathbb{Z}}
\def\RCG{R_{\mathcal{C}}(G)}
\def\C{\mathcal{C}}
\def\D{\mathcal{D}}
\def\A{\mathcal{A}}
\def\x{\textbf{x}}
\begin{document}
\thispagestyle{empty}

\title{ Rings associated to coverings of finite $p$-groups}

\thanks{Research of the second author supported by the Louisiana  BoR [LEQSF(2012-15)-RD-A-20].}

\email{gary.walls@selu.edu, lhwang@pitt.edu}

\keywords{finite $p$-groups, covers of groups, rings of functions}
\it
\footnote { Correspondence: gary.walls@selu.edu \\
                   2010 Mathematics Subject Classification 16S60 \\
                      \it Key words and phrases: \rm  finite $p$-group, covers of groups, rings of functions} \rm
\footnote {Research of the second author supported by the Louisiana BoR [LEQSF(2012-15)-RD-A-20]}
 \begin{center} \bf  Rings associated to coverings of finite $p$-groups \\ \rm

\bigskip
                   Gary WALLS$^1$, Linhong WANG$^2$ \\
 $^1$Department of Mathematics, Southeastern Louisiana University, Hammond, LA 70402 USA \\
$^2$Department of Mathematics, University of Pittsburgh, Pittsburgh, PA \\ 15260 USA
\end{center}
\address{Department of Mathematics\\ Southeastern Louisiana University\\ Hammond, LA 70402\\
                 Department of Mathematics\\ University of Pittsburgh\\ Pittsburgh, PA 15260}

\bigskip
\bf Abstract. \rm  In general the endomorphisms of a non-abelian group do not form a ring under the operations of addition and composition of functions. Several papers have dealt with the ring of functions defined on a group which are endomorphisms when restricted to the elements of a cover of the group by abelian subgroups. We give an algorithm which allows us to determine the elements of the ring of functions of a finite $p$-group which arises in this manner when the elements of the cover are required to be either cyclic or  elementary abelian of rank $2$. This enables us to determine the actual structure of such a ring as a subdirect product.
\par
A key part of the argument is the construction of a graph whose vertices are the subgroups of order $p$ and whose edges are determined by the covering.


\section{Introduction}

Covers of groups by subgroups and rings of functions that act as endomorphisms on each subgroups are studied in many papers including \cite{ckmn, camax,krmax,mks}.

\begin{deff} Suppose that $G$ is a group and $\mathcal{C}$ is a collection of subgroups of $G$. We say that $\mathcal{C}$
is a cover of $G$ provided $\bigcup_{C\in\mathcal{C}} C=G$.
\end{deff}

If all the elements of $\mathcal{C}$ have a certain property $\gamma$, we say that $\mathcal{C}$ is a $\gamma$-covering of $G$. It is well known, e.g. \cite{jaco}, that the endomorphisms of a non-abelian group $G$ do not necessarily form a ring under the operations of function addition and composition. Coverings by abelian subgroups are used to obtain rings of functions on $G$.

\begin{deff} Let $G$ be a group and $\mathcal{C}$ be an abelian-covering of $G$. Define
\[R_{\mathcal{C}}(G) = \{ f:G \rightarrow G \ | \ \text{ for each } C \in \mathcal{C}, f|_C \in \text{ End}(C) \}.\]
\end{deff}
Note that $R_{\mathcal{C}}(G)$ does form a ring under the natural operations on functions, since functions in $R_{\mathcal{C}}(G)$ are endomorphisms when restricted to the subgroups of the cover $\mathcal{C}$. The rings $R_{\mathcal{C}}(G)$ are used in \cite{mks} to classify the maximal subrings of the nearring $M_0(G)$ of the zero preserving functions defined on $G$.
\vspace {.1in}

Let $p$ be a prime and $G$ be a finite $p$-group. In this paper we consider the particular case $(*)$ where all the subgroups in $\mathcal{C}$ are either \emph{maximal cyclic} $p$-groups of $G$ or are \emph{elementary abelian of order} $p^2$. Let $\mathcal{C}$ be a $*$-covering of a finite $p$-group $G$. We prove the following structure theorem to describe the rings arising as $R_{\mathcal{C}}(G)$'s.
\vspace{.1in}

\noindent\textbf{Theorem}
\emph{
Let $G$ be a finite $p$-group, $\mathcal{C}$ be a $*$-covering of $G$. Then $R_{\mathcal{C}}(G)$ is isomorphic to a direct product of rings isomorphic to $M_2(\mathbb{Z}_p)$ or rings of the form of \ref{MatrixRingsAll}.
}
\vspace{.1in}

A key part of our approach is a graph defined in \ref{graph}. The vertices of the graph are the subgroups of $G$ of order $p$ and the edges are determined by the particular covering used.
Each function in $R_{\mathcal{C}}(G)$ is defined on the cyclic subgroups of $G$.
This definition is determined using a specific matrix and associated vector of tuples, even though $f$ may not be linear.
In \ref{ex1}-\ref{ex3}, a few examples are provided to illustrate the theorem. We use the structure theorem to determine conditions for rings arising as $R_{\mathcal{C}}(G)$'s to be of special types. In particular, when the rings are simple we see that the ring $R_{\mathcal{C}}(G)$ must be isomorphic to either $\mathbb{Z} _p$ or $M_2(\mathbb{Z} _p)$. A similar result using a different technique appears in \cite{ckmn}, where covers by subgroups of order $p^2$ are used for finite $p$-groups of exponent $p$.
\vspace{.1in}

Throughout this paper, we always assume that $G$ is a finite $p$-group and $\mathcal{C}$ is a $*$-covering of $G$. We refer to the subgroups in $\mathcal{C}$ as elements of  $\mathcal{C}$ or cells in  $\mathcal{C}$.

\section{Some particular rings}
In this section we present some particular rings needed in order to state the conclusion of our main result. For any positive integer $n$, the endomorphism ring $\text{End}(\mathbb{Z}_{p^n})$ is ring-isomorphic to $\mathbb{Z}_{p^n}$, and is a simple ring if and only if $n=1$. Further, $\text{End}(\mathbb{Z}_p \times \mathbb{Z}_p)$ is isomorphic to $M_2(\mathbb{Z}_p)$, the ring of $2 \times 2$ matrices over $\mathbb{Z}_p$, and so is always simple.
In addition, we will need a subdirect product ring in \ref{MatrixRingsAll}, which is developed by first constructing the matrix ring $N_{i_1,\, i_2,\, \ldots,\, i_n}^{m+n}$ in \ref{MatrixRingsOrderp} and the ring $R_{\Lambda(K)}$ in \ref{MatrixRingsCyclic}.

\begin{note}\label{MatrixRingsOrderp}
Given integers $m> 0$ and $n\ge 0$, we define a ring of $(m+n) \times (m+n)$-
matrices as follows
\[ N_{i_1,\, i_2,\, \ldots,\, i_n}^{m+n}=
\left\{
\left[
\begin{array}{c|c}
\lambda I_m & J(\nu_1, \cdots, \nu_n) \\
\hline
0  & D(\mu_1, \cdots, \mu_n)
\end{array}
\right]
| \
\lambda, \nu_1,\, \ldots,\, \nu_n, \mu_1,\, \ldots,\, \mu_n \in \mathbb{Z}_p
\right\}
\]
where \[D(\mu_1, \cdots, \mu_n)=
\left[ \begin{array}{ccc}
\mu_1 &   \cdots   & 0 \\
\vdots   & \ddots & \vdots \\
0 &     \cdots   & \mu_n
\end{array} \right]
\]
and $J(\nu_1, \cdots, \nu_n)$ is an $m \times n$ matrix which has value $\nu_j$ at the $(i_j, j)$ entry, for $1\leq j \leq n$ and $1 \le i_1,\, \ldots,\, i_n \le m$, and zeroes elsewhere. Note
that there is at most one nonzero entry in each column of $J(\nu_1,\, \cdots, \nu_n)$. It is easy to see that $N_{i_1,\,i_2,\, \ldots,\, i_n}^{m+n}$ is a ring and that
\[I_m^{m+n}=
\left\{
\left[
\begin{array}{c|c}
\lambda I_m & 0 \\
\hline
0 & 0  \end{array} \right]
| \  \lambda \in \mathbb{Z}_p
\right\}
\]
is an ideal of $N_{i_1,\, i_2, \ldots,\, i_n}^{m+n}$, and so $N_{i_1,\,i_2,\, \ldots,\, i_n}^{m+n}$ is never a simple ring, if $n>0$.
\end{note}

\begin{note}\label{MatrixRingsCyclic}
Let $K$ be a subgroup of order $p$ in $G$. There are maximal cyclic subgroups of $G$ that contain $K$.
Assuming there is more than one, let $\Lambda(K)$ be the directed downward lattice of these cyclic subgroups containing $K$.
Define $S(K)$ to be the set of functions on $\Lambda(K)$ that are endomorphisms when restricted to the vertices of $\Lambda(K)$.
On each maximal subgroup in $\Lambda(K)$ of order $p^n$, these functions are multiplications by elements of $\Z_{p^n}$.
As we move down from one vertex to the one below, these functions are multiplied by $p$.
Obviously, these functions will agree on the vertices of $\Lambda(K)$.
Each function in $S(K)$ can be defined by beginning with an element $\lambda \in \mathbb{Z}_p$, which determines an endomorphism on $K$, and then pulling it back up the vertices in $\Lambda(K)$.
Thus, for a fixed $\lambda \in \mathbb{Z}_p$, such a function can be represented as an appropriate tuple $\x=(\lambda_1, \cdots, \lambda_{\phi(K)})$ where $\phi(K)$ is the number of the maximal cyclic subgroups in $\Lambda(K)$ and where each entry $\lambda_i\in \mathbb{Z}_{p^{n_i}}$ determines the endomorphism on a maximal cyclic subgroup of order $p^{n_i}$ in $\Lambda(K)$ such that the properties discussed above hold.
The set of these functions, associated with $K$ and $\lambda$, is denoted as $R_{\Lambda(K),\,\lambda}$.
By this notation, we allow the trivial case when $\Lambda(K)$ is a singleton and $R_{\Lambda(K),\,\lambda}$ is the same as $\{(\lambda)\}$. For each subgroup $K$ of $G$ of order $p$ contained in a maximal cyclic subgroup, the set $R_{\Lambda(K)}=\{R_{\Lambda(K),\,\lambda}\; \text{for}\; \lambda \in \mathbb{Z}_p\}$ does form a ring.
\end{note}

\begin{note}\label{MatrixRingsAll}
A subdirect product of rings can be formed from rings discussed in \ref{MatrixRingsOrderp} and \ref{MatrixRingsCyclic}.
For any matrix in the ring $N_{i_1,\, i_2,\, \ldots,\, i_n}^{m+n}$ of \ref{MatrixRingsOrderp} and certain selected subgroups $K_1, \ldots, K_m$ of $G$ of order $p$, we associate to the diagonal entries $\lambda, \ldots, \lambda, \mu_1, \ldots, \mu_n$ some tuples from $R_{\Lambda(K_i),\,\lambda}$ for $i=1, \ldots, m$ and tuples $(\mu_1), \ldots, (\mu_n)$, respectively. That is
\[
\left(
\left[
\begin{array}{c|c}
\lambda I_m & J(\nu_1, \cdots, \nu_n) \\
\hline
0  & D(\mu_1, \cdots, \mu_n)
\end{array}
\right],
\begin{array}{c}
\x_1\\
\vdots\\
\x_{m}\\
(\mu_1)\\
\vdots\\
(\mu_n)
\end{array}
\right),
\]
where each $\x_i\in R_{\Lambda(K_i), \lambda}$. The arrays constructed in this way form a \emph{subdirect} product of rings $N_{i_1,\, i_2,\, \ldots,\, i_n}^{m+n}$ and $R_{\Lambda(K_1)}, \ldots,R_{\Lambda(K_{m})}$. In particular, if $m=1$ and $n=0$, the subdirect product is isomorphic to a ring of the form of $R_{\Lambda(K)}$, which is isomorphic to a direct product of $\Z_{p^n}$ for various integers $n$.
\end{note}

\section{Determining the elements of the ring $R_{\mathcal{C}}(G)$}

One of the main concerns in determining functions of $R_{\mathcal{C}}(G)$ is to make sure that they are well-defined. We introduce the following graph. The purpose of using such a graph is reflected in Corollary \ref{fonComponents}, which is a direct consequence of Lemma \ref{3-intersecting}. This lemma appeared in \cite{ckmn}. For completeness, we include its proof.
\par
\begin{deff} \label{graph}
Let $T_p(G)$ denote the set of subgroups of $G$ of order $p$. Let $\mathcal{G}$ be the graph whose set of vertices is $T_p(G)$.
Two vertices $A, B $ are joined in $\mathcal{G}$ by an edge provided that there is a cell $C \in \mathcal{C}$ such that $A,\, B \subset C$ and there exist $C_1, C_2, C_3 \in \mathcal{C}$ with intersections $C\cap C_1, C\cap C_2$, and $C\cap C_3$  all distinct subgroups of order $p$.
We call this graph \emph{the $3$-intersecting graph} of $G$. For $A \in T_p(G)$, we let $[A]$ denote the $\mathcal{G}$-\emph{connected component} of $\mathcal{G}$ which contains $A$,
and let \[\text{ Con}(\mathcal{G})= \big\{ [A]\, |\, A \in T_p(G) \big\}\] denote the set of connected components of $\mathcal{G}$.
\end{deff}
\par

\begin{lem}\emph{( cf. \cite[Lemma 6.2]{ckmn})} \label{3-intersecting}
Suppose $A$ and $B$ are two distinct subgroups in $T_p(G)$ connected by an edge in the $3$-intersecting graph $\mathcal{G}$. Then for any $ f \in R_{\mathcal{C}}(G)$ there is a $\lambda \in \mathbb{Z}_p$ such that $f(x)=\lambda x$ for any $x$ in the cell $C=A\times B$.
\end{lem}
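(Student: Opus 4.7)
The plan is to reduce everything to an elementary linear-algebra fact: a $2\times 2$ matrix over $\mathbb{Z}_p$ that stabilizes three distinct lines must be a scalar matrix.

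First I would identify the cell $C$. Since every cell in $\mathcal{C}$ is either maximal cyclic (hence contains a unique subgroup of order $p$) or elementary abelian of order $p^2$, the fact that $C$ contains the two \emph{distinct} subgroups $A, B \in T_p(G)$ forces $C$ to be elementary abelian of order $p^2$, so $C = A \times B$. View $C$ as a $2$-dimensional vector space over $\mathbb{Z}_p$; then $f|_C \in \mathrm{End}(C)$ is given by some matrix $M \in M_2(\mathbb{Z}_p)$, and the conclusion we want is precisely that $M = \lambda I$.

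Next I would exploit the three cells $C_1, C_2, C_3$ supplied by the edge condition to produce three distinct $M$-invariant lines. Put $D_i := C \cap C_i$, so the $D_i$ are three distinct subgroups of order $p$ in $C$. For any $x \in D_i$, on the one hand $x \in C$ gives $f(x) \in C$ because $f|_C \in \mathrm{End}(C)$; on the other hand $x \in C_i$ gives $f(x) \in C_i$ because $f|_{C_i} \in \mathrm{End}(C_i)$. Therefore $f(x) \in C \cap C_i = D_i$, so each $D_i$ is $M$-invariant.

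Finally I would finish with the linear-algebra observation. Choose coordinates so that $D_1$ and $D_2$ are the two coordinate axes; then $M$-invariance of both forces $M = \mathrm{diag}(\lambda, \mu)$. The third invariant line $D_3$ is spanned by a vector $(a,b)$ with both $a,b \neq 0$ (since $D_3 \neq D_1, D_2$), and the $M$-eigenvector condition $(\lambda a, \mu b) = \alpha(a,b)$ forces $\lambda = \mu$. Hence $M = \lambda I$ for some $\lambda \in \mathbb{Z}_p$, and $f(x) = \lambda x$ for every $x \in C$.

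There is essentially no technical obstacle here. The only subtlety worth being careful about is the step $f(D_i) \subseteq D_i$: it is not automatic from $f|_{C_i} \in \mathrm{End}(C_i)$ alone when $C_i$ is elementary abelian (an endomorphism of $\mathbb{Z}_p \times \mathbb{Z}_p$ need not fix each of its order-$p$ subgroups), and one must combine that with $f|_C \in \mathrm{End}(C)$, as done above, to confine $f(x)$ to $C \cap C_i$.
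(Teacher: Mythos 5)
Your proof is correct and follows essentially the same route as the paper: both arguments show the three intersections $C\cap C_i$ are $f$-invariant lines in $C\cong\mathbb{Z}_p\times\mathbb{Z}_p$ and then deduce that an endomorphism of $C$ preserving three distinct lines is scalar. Your explicit justification of the invariance step ($f(x)\in C\cap C_i$ since $f|_C\in\End(C)$ and $f|_{C_i}\in\End(C_i)$) is a nice spelling-out of what the paper dismisses as ``clear,'' but it is the same argument.
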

\begin{proof} Since $A$ and $B$ are connected by an edge in $\mathcal{G}$, there is a $C \in \mathcal{C}$ so that $C=A\times B \cong \mathbb{Z}_p \times \mathbb{Z}_p$ and there exist $C_1, C_2, C_3 \in \mathcal{C}$ so that $C \cap C_1=\langle e_1\rangle , C \cap C_2=\langle e_2\rangle , \text{ and } C \cap C_3=\langle e_3\rangle $ are three distinct subgroups of order $p$. For any $ f \in R_{\mathcal{C}}(G)$, it is clear that $\langle e_1\rangle , \langle e_2\rangle , \langle e_3\rangle $ must be $f$-invariant. Hence $f(e_i)=\lambda_i e_i$, 
for some $\lambda_1, \lambda_2, \text{ and }
\lambda_3 \in \mathbb{Z}_p$. Note that $C=\langle e_1\rangle \times \langle e_2\rangle $ and so $e_3=\mu_1 e_1 + \mu_2 e_2$ for some nonzero $\mu_1, \mu_2 \in \mathbb{Z}_p$. It follows that $f(e_3)=f(\mu_1 e_1 + \mu_2 e_2)= \mu_1 f(e_1) +\mu_2 f(e_2)=\mu_1 \lambda_1 e_1 + \mu_2 \lambda_2 e_2$. This must equal $\lambda_3 e_3=\lambda_3 (\mu_1 e_1 +\mu_2 e_2)=\lambda_3 \mu_1 e_1 +\lambda_3 \mu_2 e_2$. Since $\{ e_1, e_2 \}$ is a basis of $C$, we get $\lambda_3 \mu_1 = \lambda_1 \mu_1 \text{ and } \lambda_3 \mu_2 = \lambda_2 \mu_2$. It follows that $\lambda_1=\lambda_2$, and so $f|_C$ is scalar multiplication for any $f\in R_{\mathcal{C}}(G)$.
\end{proof}

\begin{cor}\label{fonComponents}
Suppose $A \in T_p(G)$ and $| [A] | >1$. Then $f|_{\cup[A]}$ is multiplication by a scalar $\lambda$ in $\mathbb{Z}_p$.
\end{cor}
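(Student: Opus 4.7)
The plan is to apply Lemma \ref{3-intersecting} to every edge of the connected component $[A]$ and then propagate the resulting scalars through $\mathcal{G}$. For each edge joining vertices $B, B' \in [A]$, the lemma produces a scalar $\lambda_{BB'} \in \mathbb{Z}_p$ such that $f(x) = \lambda_{BB'} x$ for every $x$ in the corresponding cell $B \times B' \in \mathcal{C}$; in particular $f|_B$ and $f|_{B'}$ are both multiplication by $\lambda_{BB'}$.

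The first step is to attach a single scalar $\lambda_B$ to each vertex $B \in [A]$ that has a neighbor. If $B$ is adjacent to two distinct vertices $B'$ and $B''$, then on the nontrivial cyclic group $B \cong \mathbb{Z}_p$ the function $f$ agrees with multiplication by both $\lambda_{BB'}$ and $\lambda_{BB''}$, so the two scalars coincide. Hence setting $\lambda_B := \lambda_{BB'}$ for any neighbor $B'$ yields a well-defined element of $\mathbb{Z}_p$ at each vertex of $[A]$ that has a neighbor.

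Next I would propagate this scalar along paths. For any two vertices $B, B'$ in $[A]$, choose a path $B = B_0, B_1, \ldots, B_k = B'$ in $\mathcal{G}$; each consecutive pair $(B_i, B_{i+1})$ is an edge, and $\lambda_{B_i}$ and $\lambda_{B_{i+1}}$ are both computed by that edge's scalar, so the previous step gives $\lambda_{B_i} = \lambda_{B_{i+1}}$. Induction on $k$ yields $\lambda_B = \lambda_{B'}$; write $\lambda$ for this common value.

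Finally, the hypothesis $|[A]| > 1$ guarantees that every vertex of $[A]$ lies on at least one edge, since each is linked to some other vertex of the component by a path of positive length whose initial edge is incident to it. Therefore $f|_B$ is multiplication by $\lambda$ for every $B \in [A]$, and taking the union over $B \in [A]$ gives $f|_{\cup [A]}$ equal to multiplication by $\lambda$, as claimed. I do not anticipate a substantive obstacle: the argument is a routine connected-component propagation of the cell-wise scalars supplied by Lemma \ref{3-intersecting}, with the only degenerate possibility---$|[A]| = 1$, in which there is no edge available to seed a scalar---excluded by hypothesis.
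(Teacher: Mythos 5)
Your argument is correct and is exactly the component-propagation that the paper leaves implicit when it calls Corollary \ref{fonComponents} a direct consequence of Lemma \ref{3-intersecting}: the lemma makes $f$ scalar on each edge's cell, agreement on a common vertex $B\cong\mathbb{Z}_p$ forces the edge scalars to coincide, and connectivity plus $|[A]|>1$ spreads a single $\lambda$ over all of $\cup[A]$. No gap; this matches the paper's intended reasoning.
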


\begin{partition} Note that some of the connected components in $\text{Con}(\mathcal{G})$ may be singletons, as cells may not have three distinct intersections. We partition the cells in $\mathcal{C}$ based on their intersections with other cells in $\mathcal{C}$. Set $\ds{\mathcal{C}=\bigsqcup_{i=0}^3 \mathcal{C}_i}$, where
\begin{itemize}
\item[\,] $\mathcal{C}_0=\{ C \in \mathcal{C}\; |\; C \cap C' = \{0\}$ for any $C' \in \mathcal{C}$ and $ C'\neq C\}$
\item[\,] $\mathcal{C}_3=\{ C \in \mathcal{C}\; |\; C_1 \cap C, C_2 \cap C$, and $C_3 \cap C$ are all distinct subgroups of order \hspace*{0.4in} $p$  for some $C_1$, $C_2$, $C_3 \in \mathcal{C}\}$
\item[\,] $\mathcal{C}_2= \{ C \in \mathcal{C} \setminus \mathcal{C}_3\; |\; C_1 \cap C$ and $C_2 \cap C$ are distinct subgroups of order $p$ for \hspace*{0.4in}  some $C_1, C_2 \in \mathcal{C} \}$
\item[\,] $\mathcal{C}_1=\mathcal{C} \setminus (\mathcal{C}_0\cup\mathcal{C}_2\cup\mathcal{C}_3).$
\end{itemize}
Note that a cell $C\in \C_2$ may have more than two cells intersecting with it, for example, $C_1 \cap C$ and $C_2 \cap C=C_3\cap C$ are distinct subgroups of order $p$ for some $C_1, C_2, C_3 \in \mathcal{C}$.  The above partition reveals the structure of a cover $\mathcal{C}$ that we will use to prove the main result.
\end{partition}
\begin{note}\label{f-necessary}
We will show constructively how a function $f\in R_{\mathcal{C}}(G)$ can be defined on $G$ w.r.t. a chosen cover $\mathcal{C}$.
First denote a function  from $\text{ Con}(\mathcal{G})$ to $\mathbb{Z}_p$ by $F$.
Let $x$ be an element of order $p$ in $G$.
If $x$ belongs to a cyclic cell, then $\langle x\rangle$ is $f$-invariant and we define $f(x)=F([\langle x \rangle]) x$.
It is clear that any cyclic cell can only belong to either $\C_0$ or $\C_1$.
If $x$ belongs to a non-cyclic cell, there are several cases.

\emph{Case 1}. If $x\in C$ for some $C\in \mathcal{C}_3$, following Corollary \ref{fonComponents}, we define $f(x)=F([\langle x \rangle])x$.

\emph{Case 2}. If $x\in C$ for some $C\in \mathcal{C}_2$, then there are $C_1, C_2 \in \mathcal{C}$ such that $C \cap C_1 = \langle e_2(C)\rangle$,  $C \cap C_2=\langle e'_2(C)\rangle$ for some element $e_2(C)$ and $e'_2(C)$ of order $p$. Thus $C=\langle e_2(C)\rangle\times \langle e'_2(C)\rangle$ and $x=\alpha e_2(C) + \beta e'_2(C)$ for some $\alpha,\, \beta \in \mathbb{Z}_p$. Note that both $\langle e_2(C)\rangle$ and $\langle e'_2(C)\rangle$ are $f$-invariant. Hence we have
\[f: ( e_2(C),\;  e'_2(C)) \mapsto ( e_2(C),\; e'_2(C))
\left(
\begin{array}{cc}
F([\langle e_2(C)\rangle]) & 0\\
0 & F([\langle e'_2(C)\rangle])
\end{array}
\right)
\]
and $f(x)$ can be defined accordingly. Note that $e_2(C)$ and $e'_2(C)$ are symmetric for each $C\in \C_2$.

\emph{Case 3}. If $x\in C$ for some noncyclic cell $C$ in $\mathcal{C}_1$, then $C \cap C_1 =\langle e_1(C)\rangle$ for some element $e_1(C) \in C$ and $C_1 \in \mathcal{C}$. Pick $b_1(C) \in C$ such that $C=\langle e_1(C)\rangle \times \langle b_1(C)\rangle$.
The choice of $b_1(C)$ is not unique, but $C$ is the unique cell in $\mathcal{C}$ which contains $b_1(C)$. Suppose $x=\alpha e_1(C) + \beta b_1(C)$ for some $\alpha, \, \beta \in \mathbb{Z}_p$. Note that $f(b_1(C))\in C$
and $\langle e_1(C)\rangle$ is $f$-invariant. Hence we have
\[f: ( e_1(C),\;  b_1(C)) \mapsto ( e_1(C),\;  b_1(C))
\left(
\begin{array}{cc}
F([\langle e_1(C)\rangle]) & H(b_1(C)) \\
0 & F([\langle b_1(C)\rangle])
\end{array}
\right)
\]
where $H(b_1(C))$ is a scalar in $\Z_p$.
Then we define
\[f(x)= \Big(\alpha F\big([\langle e_1(C)\rangle ]\big) + \beta H(b_1(C))\Big) e_1(C) + F([\langle b_1(C)\rangle ]) b_1(C).\]
We point out here that if a different choice had been made for $b_1(C)$, it is possible to get the same value for $f(x)$ by choosing a different scalar $H(b_1(C))$.

\emph{Case 4}. If $x\in C$ for some noncyclic cell $C$ in $\mathcal{C}_0$, then $C=\langle b_0(C) \rangle \times \langle b'_0(C)\rangle$ for some $b_0(C),\, b'_0(C) \in C$. The choice of the basis $\{ b_0(C), b'_0(C) \}$ is not unique. Suppose $x=\alpha b_0(C) + \beta b'_0(C)$ for some $\alpha, \, \beta \in \mathbb{Z}_p$. Note that $f(b_0(C))$ and $f(b'_0(C))$ must be in $C$. Hence,
\[f: ( b_0(C),\;  b'_0(C)) \mapsto ( b_0(C),\;  b'_0(C))
\left(
\begin{array}{cc}
F([\langle b_0(C)\rangle]) & B(b'_0(C)) \\
A(b_0(C)) & F([\langle b'_0(C)\rangle])
\end{array}
\right)
\]
where $A(b_0(C))$ and $B(b'_0(C))$ are scalars in $\Z_p$. Then $f(x)$ can be defined accordingly.

After setting the image of $f$ on any element of order $p$ in $G$, now we extend $f$ to the elements of order bigger than $p$,  if there are any. Let $\langle y\rangle$ be a maximal cyclic subgroup of $G$ and $|y|=p^n$ with $n>1$. Note that $\langle y\rangle$ must be a cell in $\mathcal{C}$. Then $f|_{\langle y\rangle} \in \text{ End}(\langle y\rangle) \cong \text{ End}(\mathbb{Z}_{p^n}) \cong \mathbb{Z}_{p^n}$, and so $f(y)=\lambda y$ for some $\lambda \in \mathbb{Z}_{p^n}$. Recall we have already defined $f(y^{p^{n-1}})= F([\langle y^{p^{n-1}} \rangle])y^{p^{n-1}}$, as $x=y^{p^{n-1}}$ is an element of order $p$. Simply working our way up the lattice of the cyclic subgroup $\langle y\rangle$, we can choose a proper scalar $\lambda$ such that $f(y)=\lambda y$ and $f(y^{p^{n-1}})= F([\langle y^{p^{n-1}} \rangle])y^{p^{n-1}}$. Notice that as we work our way up the lattice we have choices, but each choice leads to a different function $f \in R_{\mathcal{C}}(G)$.

It is clear that every function $f$ in $R_{\mathcal{C}}(G)$ arises in the above fashion,
subject to the cover $\C$ (mainly the intersections of the cells in $\C$ such as $e_2(C)$, $e'_2(C)$, $e_1(C)$),
the elements of the form of $b_0(C), \, b'_0(C),\, b_1(C)$ as described above, and the choices of the the function $F$ and scalars $H(b_1(C))$, $A(b_0(C))$, $B(b'_0(C))$.
In terms of notation, $e_i(C)$ for some integer $i$ is always an intersection or contained in an intersection of at least two cells.
Note that, by our notation, it may occur that $\langle e_1(C_1)\rangle=\langle e_2(C_2)\rangle=K$ when $C_1$ is a cell in $\C_1$, $C_2$ is a cell in $\C_2$, and $K=C_1\cap C_2$.
Therefore, to determine $f$, we need a set of elements of order $p$ (indeed subgroups of order $p$) which includes generators of any noncyclic cell in $\C$ and the unique element of order $p$ (indeed the $p$-socle) of any cyclic cell.
\end{note}

\begin{setup}\label{setup}
Given a cover $\C$ of $G$, we setup the following sets of subgroups of order $p$.
The union of these sets is denoted by $\mathcal{B}(\C)$.
\begin{align*}
\mathcal{B}_3(\C) &= \{ \langle g\rangle\, |\,  g \in C \text{ for some } C \in \mathcal{C}_3\; \text{and}\; \langle g \rangle =C\cap C'\; \text{for some}\; C'\in \C\}\\
\mathcal{B}_2(\C) &= \{\langle e_2(C) \rangle,\, \langle e'_2(C) \rangle\, |\,  C \in \mathcal{C}_2\}\\
\mathcal{B}_1^1 (\C)&= \{ \langle e_1(C)\rangle \,|\, C \in \mathcal{C}_1\}\\
\mathcal{B}_1^2 (\C)&= \{ \langle b_1(C)\rangle\, |\, C \in \mathcal{C}_1\; \text{and}\; C=\langle e_1(C)\rangle \times \langle b_1(C)\rangle\; \text{for}\; e_1(C)\in \mathcal{B}_1^1(\C) \}\\
\mathcal{B}_0 (\C)&= \{\langle b_0(C)\rangle,\, \langle b'_0(C)\rangle\, |\, C \in \mathcal{C}_0\}
\end{align*}
As illustrated in \ref{f-necessary}, we also need the following functions.
\begin{align*}
F:\; & \text{ Con}(\mathcal{G}) \rightarrow \mathbb{Z}_p\\
H:\; & \{b_1(C) \, |\, \langle b_1(C)\rangle \in \mathcal{B}_1^2 (\C)  \} \rightarrow \mathbb{Z}_p\\
A:\; & \{b_0(C) \, |\, \langle b_0(C)\rangle \in \mathcal{B}_0(\C)  \}  \rightarrow \mathbb{Z}_p\\
B:\; & \{b'_0(C) \, |\, \langle b'_0(C)\rangle \in \mathcal{B}_0 (\C)  \} \rightarrow \mathbb{Z}_p
\end{align*}
\end{setup}

\begin{thm}\label{MatrixRings}
The ring $R_{\mathcal{C}}(G)$ with a chosen covering $\mathcal{C}$ is isomorphic to a direct product of matrix rings isomorphic to $M_2(\mathbb{Z}_p)$ or rings of the form of \ref{MatrixRingsAll}.
\end{thm}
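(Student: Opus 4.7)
The approach is to exploit the explicit description of $\RCG$ developed in \ref{f-necessary} and \ref{setup}: each $f \in \RCG$ is determined by the functions $F, H, A, B$ and, for every order-$p$ subgroup $K$ that is the $p$-socle of at least one maximal cyclic cell, a compatible tuple $\x \in R_{\Lambda(K),\,F([K])}$ of liftings. I would partition this data into independent blocks, show that extracting per-block parameters defines a ring isomorphism from $\RCG$ to the product of block rings, and identify each block ring as either $M_2(\Z_p)$ or a ring of the form \ref{MatrixRingsAll}.

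To specify the blocks, I would distinguish two types. Type (a): for each elementary abelian $C \in \C_0$, the four free parameters $F([\langle b_0(C)\rangle])$, $F([\langle b'_0(C)\rangle])$, $A(b_0(C))$, $B(b'_0(C))$ describe arbitrary endomorphisms of the isolated cell $C \cong \Z_p \times \Z_p$ and so contribute a factor $M_2(\Z_p)$. Type (b): for each connected component $\mathcal{K} = \{K_1, \dots, K_m\}$ of $\mathcal{G}$, Corollary \ref{fonComponents} forces all $K_i$ to carry the same scalar $\lambda = F([\mathcal{K}]) \in \Z_p$, and to $\mathcal{K}$ one attaches the cells $C_1, \dots, C_n \in \C_1$ with $\langle e_1(C_j)\rangle = K_{i_j}$. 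Each $C_j$ contributes a diagonal scalar $\mu_j = F([\langle b_1(C_j)\rangle])$ and an off-diagonal entry $\nu_j = H(b_1(C_j))$; together with the cyclic-lifting tuples $\x_i \in R_{\Lambda(K_i),\,\lambda}$, this is exactly the data of the subdirect product in \ref{MatrixRingsAll} built from $N_{i_1,\dots,i_n}^{m+n}$ and $R_{\Lambda(K_1)}, \dots, R_{\Lambda(K_m)}$. The $m=1,\,n=0$ degeneration subsumes isolated maximal cyclic cells in $\C_0$.

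To prove the map is a ring isomorphism, bijectivity follows from \ref{f-necessary}: every $f$ arises uniquely from its parameters. Additivity is transparent since each parameter is a function value in $\Z_p$ or some $\Z_{p^n}$. The central multiplicative check is on a type (b) block containing a $\C_1$ cell $C$: expanding $(f\circ g)|_C$ in the basis $\{e_1(C), b_1(C)\}$ yields $H_{fg}(b_1(C)) = F_f([\langle e_1(C)\rangle])\, H_g(b_1(C)) + H_f(b_1(C))\, F_g([\langle b_1(C)\rangle])$, which matches exactly the $(i_j, j)$-entry obtained from multiplying two matrices in $N_{i_1,\dots,i_n}^{m+n}$. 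Diagonal $F$ and $\mu$ entries multiply straightforwardly, the cyclic-lifting tuples multiply entrywise in their respective $\Z_{p^n}$'s, and the $M_2(\Z_p)$ factors multiply as usual matrices.

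The main obstacle I expect is the verification that cells in $\C_2$ do not merge two distinct blocks. When $\langle e_2(C)\rangle$ and $\langle e'_2(C)\rangle$ lie in different connected components of $\mathcal{G}$, the function $f|_C$ is diagonal in this basis with entries pulled independently from the two blocks, and one must check that this diagonal form is preserved under composition so that each diagonal entry of $(fg)|_C$ is a product of same-block scalars. A secondary subtlety is notational: a single order-$p$ subgroup may simultaneously play several roles (for instance, as $e_1(C_1)$ for some $C_1 \in \C_1$, as $e_2(C_2)$ or $e'_2(C_2)$ for some $C_2 \in \C_2$, and as a vertex of a $\C_3$-clique), so one must verify that the single scalar $F$ assigns to its connected component is used consistently in every role during the block construction.
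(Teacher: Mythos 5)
Your proposal is correct and follows essentially the same route as the paper: you use the parametrization of $f$ by $F,H,A,B$ and the lifting tuples from \ref{f-necessary}--\ref{setup}, group the data into blocks given by noncyclic $\C_0$ cells (yielding $M_2(\Z_p)$) and by connected components of $\mathcal{G}$ with their attached $\C_1$ cells and tuples from $R_{\Lambda(K_i),\lambda}$ (yielding rings of the form \ref{MatrixRingsAll}), exactly as in the paper's block-matrix construction of $[f]_{\mathcal{B}(\C)}$. Your explicit multiplicative check of the off-diagonal entry and the observation that $\C_2$ cells stay diagonal and so never merge blocks are points the paper leaves implicit, but they do not change the argument.
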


\begin{proof}
With the $\Z_p$-valued functions and sets of subgroups of order $p$ described in \ref{setup}, any function $f\in \RCG$ can be defined as illustrated in \ref{f-necessary}. To prove the theorem,  we represent the way $f$ is defined on the elements of $\mathcal{B}(\C)$ in terms of a matrix with blocks, which will be denoted by $[f]_{\mathcal{B}(\C)}$.
For this purpose, the subgroups in $\mathcal{B}(\C)$ need to be put in a certain order. The resulting ordered set will be denoted by $\A(\C)$.

We start with $\langle g\rangle \in \mathcal{B}_3(\C)$ if $\mathcal{B}_3(\C)$ is not empty.
Let $D_{\langle g\rangle}$ be the set of the elements $\langle b_1(C)\rangle$
such that $\langle y \rangle \times \langle b_1(C)\rangle$ is a noncyclic cell $C \in \mathcal{C}_1$ for some $\langle y \rangle \in [ \langle g\rangle ]\cap \mathcal{B}_3(\C)$.
Suppose $\langle b_{i1}\rangle, \ldots, \langle b_{i l_i}\rangle$ are from $D_{\langle g \rangle}$ associated to $\langle y_i\rangle\in [ \langle g\rangle ]\cap \mathcal{B}_3(\C)$ for integers $l_i$ and $i=1, \ldots, m$.
The rest of the subgroups $\langle y_{m+1} \rangle, \ldots \langle y_k \rangle \in [ \langle g\rangle ]\cap \mathcal{B}_3(\C)$, if there are any (i.e., $k\geq m$), are either contained in a cyclic cell or in a noncyclic cell $C\in \C_2$.
Let
$\A_g=\{\langle y_1\rangle, \ldots, \langle y_{m}\rangle, \langle z_{11}\rangle, \ldots, \langle z_{1\; l_1}\rangle, \ldots \langle z_{m1}\rangle, \ldots, \langle z_{m l_m}\rangle,\ \langle y_{m+1} \rangle, \ldots \langle y_k \rangle\},$
an ordered set of subgroups from $\mathcal{B}(C)$.

The matrix block of $[f]_{\mathcal{B}(\C)}$ corresponding to $\A_g$ can be determined as shown in \ref{f-necessary} case 1 and case 3.
Set $\lambda= F([\langle y_i\rangle])$ for $i=1, \ldots k$, $\mu_t= F([\langle z_{i j} \rangle])$ and $\nu_t=H(z_{i j})$ for $i=1, \ldots m $ and $t=1, \ldots, n=\sum_{i=1}^m l_i$.
Following the notation in \ref{MatrixRingsOrderp}, we see that the matrix block having the form of
\[
\left[
\begin{array}{cc}
\left[\begin{array}{c|c}
\lambda I_m & J(\nu_1, \cdots, \nu_n) \\
\hline
0  & D(\mu_1, \cdots, \mu_n)
\end{array}\right]
 &0 \\

0 & \lambda I_{k-m}
\end{array}
\right].
\]

It is clear that $\A_g$ is the first part of the ordered set $\A(\C)$. Of course,  before we pursue further, the set $\mathcal{B}(C)$ should be updated by removing the subgroups from the set $\A(\C)=\A_g$.
That is, subsets $\mathcal{B}_3(\C)$, $\mathcal{B}^1_1(\C)$, $\mathcal{B}^2_1(\C)$, and $\mathcal{B}_0(\C)$ of $\mathcal{B}(\C)$ are updated accordingly.
Then we exhaust the set $\mathcal{B}_3(\C)$ by repeating the same process with other subgroups $\langle g'\rangle \in \mathcal{B}_3(\C)$.
Clearly, $[\langle g'\rangle]\neq [\langle g\rangle]$.
Each time, the ordered set $\A(\C)$ is expanded by sets $A_{g'}, \ldots$, while the subsets of $\mathcal{B}(\C)$ are updated accordingly.

Next, we move on to the set $\mathcal{B}_2(\C)$.
If a subgroup $\langle e_2(C) \rangle$ from $\mathcal{B}_2(\C)$ also belongs to other noncyclic cells in $\C_1$, then we add $\langle e_2(C) \rangle$ and all $\langle b_1(C) \rangle$ such that $\langle e_2(C) \rangle \times \langle b_1(C) \rangle$ is a noncyclic cell in $\C_1$.
If not, we simply add $\langle e_2(C) \rangle$. Again, each time $\mathcal{B}_2(\C)$ needs to be updated.
In terms of $[f]_{\mathcal{B}(\C)}$, as shown in \ref{f-necessary} and \ref{MatrixRingsOrderp}, these two cases correspond to a matrix block, with $n$ being zero or positive, having the form of
\[
\left[
\begin{array}{c|c}
\lambda & J(\nu_1, \cdots, \nu_n) \\
\hline
0  & D(\mu_1, \cdots, \mu_n)\\

\end{array}
\right].
\]

Then we continue with subgroups in $\mathcal{B}_1^1$ and $\mathcal{B}_1^2$ in a similar way such that
adding $\langle e_1(C) \rangle$ from $\mathcal{B}^1_1(\C)$ and those $\langle b_1(C) \rangle$ from $\mathcal{B}^2_1(\C)$ for a fixed noncyclic cell $C$
results in a matrix block of the form
\[
\left[
\begin{array}{c|c}
\lambda & J(\nu_1, \cdots, \nu_n) \\
\hline
0  & D(\mu_1, \cdots, \mu_n)\\

\end{array}
\right].
\]

We finish the process by adding the subgroups $\langle b_0(C) \rangle$ and $\langle b'_0(C) \rangle$ from $\mathcal{B}_0$ in pairs or just $\langle b_0(C) \rangle$ if it is from a cyclic cell and not paired with any other subgroup of order $p$ in $\mathcal{B}_0$. Each pair corresponds to a $2\times 2$ block in $M_2(\mathbb{Z}_p)$ as shown in \ref{f-necessary} case 4. Any single subgroup of order $p$ corresponds to a $1\times 1$ block $[\lambda]$ for some $\lambda\in \Z_p$.

To summarize, we have an ordered set $\A(\C)$ of subgroups of $G$ of order $p$,
under which the definition of any function $f\in \RCG$ on elements of $G$ of order $p$ is determined
and represented in terms of a matrix $f_{\mathcal{B}(\C)}$ with blocks as described above, i.e., blocks from $M_2(\Z_p)$ and blocks of the form described in \ref{MatrixRingsOrderp} with choices of $\lambda$'s, $\mu_i$'s and $\nu_i$'s from $\Z_p$.
It is not hard to see that the ordered set $\A(\C)$ may not be unique, but the number and shape of the matrix blocks in $f_{\mathcal{B}(\C)}$ must be fixed, corresponding to the chosen cover $\C$.
Note that any such block matrix with nonzero scalar entries from $\Z_p$ contains enough information to define a function in $\RCG$ on elements of $G$ of order $p$.
The collection of these matrices, with respect to a chosen cover $\C$, does form a ring, which is a direct product of rings isomorphic to $M_2(\mathbb{Z}_p)$ or $N_{i_1,\, i_2,\, \ldots,\, i_n}^{m+n}$ as in \ref{MatrixRingsOrderp}.

To fully represent a function $f\in\RCG$, additional information needs to be attached to the matrix $f_{\mathcal{B}(\C)}$ so that $f$ is defined for elements of order $p^n$ with $n\ge 2$.
We have discussed the definition of $f$ on these elements in the last part of \ref{f-necessary}.
The $p$-socle of these cyclic cells must appear in the list $\A(\C)$.
Following the discussion and notation in \ref{MatrixRingsCyclic}, we associate the diagonal elements of each matrix block of the form, allowing $n=0$,
\[
\left[
\begin{array}{c|c}
\lambda I_m & J(\nu_1, \cdots, \nu_n) \\
\hline
0  & D(\mu_1, \cdots, \mu_n)
\end{array}
\right]
\]
an $(m+n) \times 1$ vector $(\x_1, \ldots,\x_m, \x_{m+1},\ldots,\x_{m+n})^T$, where $\x_i$ are tuples from $R_{\Lambda(K_i),\ \lambda}$ for $i=1, \ldots m$ and for the corresponding subgroup $K_i$ of order $p$ in $\A(\C)$.
If a subgroup $K_i$ does not belong to a cyclic cell of order greater than $p$, then $R_{\Lambda(K_i),\ \lambda}=\{(\lambda)\}$ as pointed in \ref{MatrixRingsCyclic}.
Note that each $x_{m+j}=(\mu_j)$, since the subgroups $\langle b_1(C_i)\rangle$ of order $p$ corresponding to $\mu_1, \cdots, \mu_n$ can only belong to the noncyclic cells $C_i$ as shown in \ref{f-necessary} case 3.
There is no need to associate any vector of tuples to the diagonal of matrix blocks from $M_2(\mathbb{Z}_p)$ because these blocks are corresponding to noncyclic cells in $\C_0$.

Finally, each extended matrix contains enough information to define a function in $\RCG$. Therefore, $R_{\mathcal{C}}(G)$ is isomorphic to a direct product of rings isomorphic to $M_2(\mathbb{Z}_p)$ or rings of the form of \ref{MatrixRingsAll}. The proof is complete.
\end{proof}

\begin{examp}\label{ex1}
Let $G=Q_8=\langle x, y\, |\, x^2=y^2, x^4=1, y^{-1}xy=x^{-1}\rangle$, the quaternion group of order 8. Then $G$ has only one subgroup of order 2.
The only $*$-covering of $G$ is $\C=\{\langle x \rangle, \langle y \rangle, \langle xy \rangle \}$.
Hence
\[
\RCG \cong
\left\{ (a, b, c)\ \Big| \ a, b, c \in \Z_4,\, 2a=2b=2c\right\}.
\]
Note that $|\RCG |=16$.
\end{examp}

\begin{examp}\label{ex2}
Let $G$ be $Q_8\times \Z_2=\langle x, y\rangle \times \langle w \rangle$. Now $G$ has only one non-cyclic subgroup of order 4 and exactly 6 cyclic subgroups of order 4. Consider two $*$-covers
\[\C=\{\langle x\rangle,\ \langle y \rangle,\ \langle xy \rangle,\ \langle xw\rangle,\ \langle yw\rangle,\ \langle xyw\rangle,\ \langle x^2, w\rangle \}\]
and
\[
\D=\{\langle x\rangle,\ \langle y \rangle,\ \langle xy \rangle,\ \langle xw\rangle,\ \langle yw\rangle,\ \langle xy w\rangle,\ \langle w\rangle,\ \langle x^2, w\rangle \}.
\]
Then we have
{\small
\[\RCG=
\left\{
\left(
\left[
\begin{array}{c|c}
\lambda_1 & d \\
\hline
0 & \lambda_2
\end{array}
\right],
\begin{array}{c}
(a,b,c)\\
(\lambda_2)
\end{array}
\right)
\ \Big|\
\lambda_1, \lambda_2, d \in 2\Z_4,\ a, b, c\in \Z_4,\ 2a=2b=2c=\lambda_1
\right\}
\]
}
and
{\small
\[R_{\mathcal{D}}(G)=
\left\{
\left(
\left[
\begin{array}{c|cc}
\lambda_1 & 0 & 0 \\
\hline
0 & \lambda_2 & 0 \\
0 & 0 & \lambda_3
\end{array}
\right],
\begin{array}{c}
(a,b,c)\\
(\lambda_2)\\
(\lambda_3)
\end{array}
\right)
\Big|\
a, b, c\in \Z_4,\ \lambda_1, \lambda_2, \lambda_3 \in 2\Z_4,\ 2a=2b=2c=\lambda_1
\right\}.
\]
}
Notice that $| \RCG |=| R_{\mathcal{D}}(G) | =64$.
\end{examp}

\begin{examp}\label{ex3}
Let $G=D_8\times \Z_2$, where $D_8=\langle x, y\, |\, x^4=1,\ y^2=1,\ (xy)^2=1\rangle$ is the dihedral group of order 8 and $\Z_2=\langle w \rangle$. Take the $*$-cover
\[
\C=\{\langle x\rangle,\ \langle xw \rangle,\ \langle xy, w \rangle,\ \langle w, y\rangle,\ \langle x^2, xy \rangle,\ \langle x^2y, w\rangle,\ \langle x^2w, xy\rangle \}
\]
Then
\[
R_{\mathcal{C}}(G)=
\Big\{
(
\textbf{M},\
\textbf{X}
)
\ |\
a, b\in \Z_4,\ \lambda_1, b_1, d_1, c_1,e_1, h_1, i_1, f_1, g_1 \in 2\Z_4,\ 2a=2b=\lambda_1
\Big\},
\]
where
{\small
$
\textbf{M}=
\left[
\begin{array}{c|cc|ccc}
\lambda_1 & 0  & 0 & 0 & 0 & 0\\
\hline
0 & b_1 & d_1 & 0 & 0 & 0\\
0 & 0 & c_1  & 0 & 0 & 0\\
\hline
0 & 0 & 0 & e_1 & h_1 & i_1\\
0  & 0 & 0 & 0 & f_1 & 0\\
0 & 0 & 0 & 0 & 0 & g_1
\end{array}
\right]
$
and
$
\textbf{X}=
\begin{array}{c}
(a,b)\\
(b_1)\\
(c_1)\\
(e_1)\\
(f_1)\\
(g_1)
\end{array}
$
}
\end{examp}
\

\begin{rem}
If $R_{\mathcal{C}}(G)$ is a simple ring, it follows from Theorem \ref{MatrixRings} that $R_{\mathcal{C}}(G)$ must be isomorphic to either $\mathbb{Z}_p$ or $M_2(\mathbb{Z}_p)$.
A similar result appears in \cite{ckmn}, where covers by subgroups of order $p^2$ are used for finite $p$-groups $G$ of exponent $p$.
An intersection condition on the subgroups in the cover that is equivalent to both $R_{\mathcal{C}}(G)$ being simple and $R_{\mathcal{C}}(G)\cong Z_p$ is developed, see \cite[Theorem 6.10]{ckmn}.
\end{rem}

It is clear that $R_{\mathcal{C}}(G) \cong M_2(\mathbb{Z}_p)$ only occurs when $G \cong \mathbb{Z}_p \times \mathbb{Z}_p$ and $\mathcal{C}=\{ G \}$. Now we derive what it takes for $R_{\mathcal{C}}(G)$ to be isomorphic to $\mathbb{Z}_p$. Suppose that $R_{\mathcal{C}}(G) \cong \mathbb{Z}_p$. It then follows that $G$ must have exponent $p$ and that the $3$-intersecting graph $\mathcal{G}$ of $G$ must be connected, as having more than one connected components leads to non-trivial ideals. Take a nonzero element $a \in G$ and let $C$ be the cell containing $a$. Since $|G| > p^2$, there is an element $b \in G \setminus C$ such that $\langle a\rangle$ and $\langle b \rangle$ are adjacent in $\mathcal{G}$. Hence $|C_G(a)| \ge p^3$. This motivates the following theorem.

\begin{thm} Suppose that $G$ is a finite $p$-group of exponent $p$ and $|C_G(a)| \ge p^3$ for any element $a \in G$. Then there is a $*$-covering $\mathcal{C}$ of $G$ such that $R_{\mathcal{C}}(G) \cong
\mathbb{Z}_p$. Conversely, if $|G| \ge p^3$ and there is $a \in G$ with $|C_G(a)|=p^2$ then $R_{\mathcal{C}}(G)$ is not simple for any $*$-covering $\mathcal{C}$ of $G$.
\end{thm}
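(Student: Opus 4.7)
The plan is to handle the two implications independently, leveraging the structural remark and discussion that directly precede the theorem statement for the converse.

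For the forward direction I would take $\mathcal{C}$ to be the family of all elementary abelian subgroups of $G$ of order $p^2$. The hypothesis $|C_G(a)|\ge p^3>p$ produces, for each nontrivial $a$, a partner $b\in C_G(a)\setminus\langle a\rangle$; then $\langle a,b\rangle$ is elementary abelian of order $p^2$ (using exponent $p$) and belongs to $\mathcal{C}$, so $\mathcal{C}$ is a $*$-cover. I would then verify the two conditions that force the matrix description in Theorem \ref{MatrixRings} to collapse to a single scalar block. First, every cell $C\in\mathcal{C}$ lies in $\mathcal{C}_3$: any subgroup $K$ of order $p$ inside $C$ satisfies $C\subseteq C_G(K)$ by abelianness, so $|C_G(K)|\ge p^3>p^2=|C|$ yields some $c\in C_G(K)\setminus C$ and a partner cell $\langle K,c\rangle\in\mathcal{C}$ meeting $C$ in exactly $K$; three of the $p+1$ subgroups of order $p$ in $C$ then witness $C\in\mathcal{C}_3$. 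Second, the $3$-intersecting graph $\mathcal{G}$ is connected: for $\langle a\rangle,\langle b\rangle\in T_p(G)$, if $a$ and $b$ commute they share the cell $\langle a,b\rangle$, and otherwise I would pick a central element $z$ of order $p$ with $z\notin\langle a\rangle\cup\langle b\rangle$, producing the path $\langle a\rangle\sim\langle z\rangle\sim\langle b\rangle$ via the cells $\langle a,z\rangle$ and $\langle b,z\rangle$. Such a $z$ exists because $Z(G)\subseteq\langle a\rangle\cup\langle b\rangle$ would force $Z(G)=\langle a\rangle$ or $Z(G)=\langle b\rangle$ (a subgroup of order $p$ contained in that union must coincide with one of the two), placing $a$ or $b$ in the center and contradicting the assumption that $a,b$ do not commute. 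With exponent $p$ there are no cyclic extensions to worry about, so Theorem \ref{MatrixRings} yields $R_{\mathcal{C}}(G)\cong\mathbb{Z}_p$.

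For the converse, the preceding remark together with Theorem \ref{MatrixRings} shows that a simple $R_{\mathcal{C}}(G)$ must be isomorphic to $\mathbb{Z}_p$ or $M_2(\mathbb{Z}_p)$, that $R_{\mathcal{C}}(G)\cong M_2(\mathbb{Z}_p)$ forces $G\cong\mathbb{Z}_p\times\mathbb{Z}_p$, and that $R_{\mathcal{C}}(G)\cong\mathbb{Z}_p$ requires $|C_G(a)|\ge p^3$ for every $a\in G$. Both possibilities are excluded by the assumptions $|G|\ge p^3$ and the existence of an $a$ with $|C_G(a)|=p^2$, so $R_{\mathcal{C}}(G)$ is not simple for any $*$-cover $\mathcal{C}$.

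The main subtlety I anticipate is the connectedness step in the forward direction, where one must argue carefully that the center always supplies the required $z$; once the cover is fixed and every cell is placed in $\mathcal{C}_3$, the reduction encoded in Theorem \ref{MatrixRings} performs the remaining bookkeeping essentially automatically.
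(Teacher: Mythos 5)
Your forward direction is correct, but it proceeds by a different construction than the paper's. The paper fixes a central element $b$ and, for each $a\in G\setminus\langle b\rangle$, uses $|C_G(a)|\ge p^3$ to choose $c_a\in C_G(a)\setminus\langle a,b\rangle$, taking the cover $\bigcup_{a}\{\langle a,c_a\rangle,\langle a,b\rangle,\langle b,c_a\rangle,\langle ab,c_a\rangle\}$; the three intersections $\langle a\rangle$, $\langle b\rangle$, $\langle ab\rangle$ of $\langle a,b\rangle$ with the other three cells place $\langle a,b\rangle$ in $\mathcal{C}_3$, so every vertex is adjacent to $\langle b\rangle$ and the graph is a connected star, whence every $f$ is a global scalar. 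Your cover by \emph{all} elementary abelian subgroups of order $p^2$ reaches the same conclusion: the verification that each cell lies in $\mathcal{C}_3$ (via $c\in C_G(K)\setminus C$ and the cell $\langle K,c\rangle$ meeting $C$ exactly in $K$) and the connectivity argument through a central $z\notin\langle a\rangle\cup\langle b\rangle$ are both sound, and with $G$ of exponent $p$ the conclusion $R_{\mathcal{C}}(G)\cong\mathbb{Z}_p$ follows most directly from Corollary \ref{fonComponents} (a single component whose union is all of $G$), rather than from the bookkeeping of Theorem \ref{MatrixRings}; either citation is acceptable. So this half is fine, by a cover that is arguably more canonical than the paper's hand-built one.

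The converse, however, is not a proof as written. The assertion you import, that $R_{\mathcal{C}}(G)\cong\mathbb{Z}_p$ forces $|C_G(a)|\ge p^3$ for every $a$, is precisely (the contrapositive of) the statement you are asked to prove; in the paper the paragraph preceding the theorem is explicitly motivation, not an established result, so leaning on it makes your argument circular. Nor is the implication a formality: adjacency of $\langle a\rangle$ to another vertex only yields an abelian cell of order $p^2$ containing $a$, i.e.\ $|C_G(a)|\ge p^2$, and upgrading this (equivalently, ruling out simplicity when $|C_G(a)|=p^2$) requires analyzing which cells can meet the cell containing $a$ in three distinct subgroups of order $p$. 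This is exactly where the paper's own proof does its work: since every abelian subgroup containing $a$ lies in $C_G(a)$, the only cells that can contain $a$ are $\langle a\rangle$ and $C_G(a)$, so any $*$-cover must include one of them, and the paper then argues that in either case $R_{\mathcal{C}}(G)$ fails to be simple. You need to supply an argument of this kind; note in doing so that one must also control the possibility that order-$p$ cells nested inside $C_G(a)$ furnish the three required intersections with the cell $C_G(a)$ --- precisely the delicate point that the heuristic you cite passes over.
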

\begin{proof} Suppose that $b \in Z(G)$ and $a$ is an element of $G$ with $a \notin \langle b\rangle$. Since $|C_G(a)| \ge p^3$, there is $c_a \in C_G(a) \setminus \langle a,b\rangle$. Consider the cover
\[\mathcal{C} =
\bigcup_{a \in G \setminus \langle b\rangle} \Big\{ \langle a,c_a\rangle, \langle a, b\rangle, \langle b, c_a\rangle, \langle ab, c_a\rangle \Big\}.\]
Now we have $\langle a,b,c_a\rangle \cong \mathbb{Z}_p \times \mathbb{Z}_p \times \mathbb{Z}_p$. It follows that $\langle a,b\rangle \cap \langle a,c_a\rangle =\langle a\rangle, \langle a,b\rangle \cap \langle b, c_a\rangle=\langle b\rangle, \text{ and } \langle a,b\rangle \cap \langle ab, c_a\rangle=\langle ab\rangle$ are three distinct subgroups of order $p$. Hence for all $a \in G \setminus \langle b\rangle$, the subgroups $\langle a\rangle$ and $\langle b\rangle$ are connected by an edge in $\mathcal{G}$. Therefore, $R_{\mathcal{C}}(G) \cong \mathbb{Z}_p$.
\par
On the other hand, if $|G| \ge p^3$ and there is an element $a \in G$ with $|C_G(a)|=p^2$, then $\langle a\rangle$ and
$C_G(a)$ are the only abelian subgroups of $G$ which can contain $a$. It follows that any
$*$-covering of $G$ must contain one or the other. In either case $R_{\mathcal{C}}(G)$ is not simple.
\end{proof}


\end{document}